\documentclass[10pt,reqno]{amsart}
\textwidth=15.5cm \oddsidemargin=0.8cm \topmargin=-1.5cm
\textheight=24cm \evensidemargin=0.8cm

%\textwidth=15.5cm \oddsidemargin=0cm \topmargin=-1.5cm
%\textheight=24cm \evensidemargin=0cm

%%%%%%liena nuemarada%
%\usepackage{lineno}
%\linenumbers

%%%%%%%%%%%%%%%%%Forspace%%%%%%%%%
\usepackage[nodisplayskipstretch]{setspace}
\setstretch{1}

%%%%%%%%%%%%%%%
%\usepackage[notref,notcite]{showkeys}

\usepackage[brazilian,english]{babel} %Hifenização
\usepackage[T1]{fontenc}
\usepackage{amsfonts,amsthm,amssymb,amsmath,mathrsfs}  %Fontes matemática e símbolos
\usepackage{graphicx,color}       %Inserir gráficos
\usepackage{latexsym}       %Símbolos especiais
\usepackage{overpic}        %Inserir figuras e escrever com a fonte do documento
\usepackage[colorlinks=false, linktocpage=true]{hyperref}
\usepackage{graphicx}
\usepackage{multicol}
\usepackage{color}
%\usepackage{xcolor}
%\usepackage{amscd}
%\usepackage{makeidx}
%\usepackage{longtable}
%\usepackage{authblk}
%%%%%%%%%%%%%%%%%%%%%%%%%%%%%%%%%%%%%%%%%%%%%%% Precisa para tabla
\usepackage{multirow} %%%%%%%%%%%%
%%%%%%%%%%%%%%%%%%%5
\usepackage{mathtools}
\usepackage{xcolor}
\mathtoolsset{showonlyrefs}

\newtheorem{theorem}{Theorem}[section]

\newtheorem{lemma}[theorem]{Lemma}
\newtheorem{proposition}[theorem]{Proposition}

\theoremstyle{definition}

\newtheorem{remark}[theorem]{Remark}

\newcommand{\bu}{\bar{u}}

\newcommand{\R}{\mathbb{R}}

\newcommand{\ub}{\bar{u}}

%pacotes e definicoes Maicon
\usepackage{a4wide}
\usepackage{mathabx}

\newcommand{\RE}{\mathrm{Re}}
\newcommand{\IM}{\mathrm{Im}}

\newcommand{\V}{\mathcal{V}}
\newcommand{\GR}{\Gamma_{\phi}}

\newcommand{\ec}{\frac{8-2b}{N-4}}

\newcommand{\n}{\Vert}
\newcommand{\nn}{\Vert_{L^2}}

\newcommand{\x}{|x|^{-b}}

\numberwithin{equation}{section}

\begin{document}
	%%-----------------------------
	%%      the top matter
	%%-----------------------------
	\title[Inhomogeneous biharmonic NLS system]{BLOW-UP RESULTS FOR INHOMOGENEOUS FOURTH-ORDER NONLINEAR SCHRÖDINGER EQUATION}

	\author[M. Hespanha]{Maicon Hespanha}
	\author[R. Scarpelli]{Renzo Scarpelli}\address{ICEx, Universidade Federal de Minas Gerais, Av. Antônio Carlos, 6627, Caixa Postal 702, 30123-970 Belo Horizonte, MG, Brazil}
	\email{mshespanha@gmail.com}
	\email{renzoscb123@gmail.com}
	
	\subjclass[2020]{35Q55, 35Q44, 35B44}
	\keywords{Energy-critical; Nonlinear Biharmonic Schrödinger Equation; Blow-up}

	\begin{abstract}
		In this paper, we investigate the blow-up phenomenon of the $H^2$ norm of solutions to the inhomogeneous
		biharmonic Schrodinger equation in two distinct scenarios. First, we consider the case of negative energy, analyzing separately the cases of radial and non-radial solutions. Then, we examine the positive energy case, where the energy is below that of the ground state and
		the ``kinetic'' energy exceeds the corresponding value for the ground state, again distinguishing between radial and non-radial solutions. Our approach is based on convexity methods, employing virial identities in the analysis. 
		
	\end{abstract}
	
	\maketitle

	\section{introduction}
	We consider following initial value problem (IVP) for the focusing inhomogeneous biharmonic nonlinear Schrödinger IBNLS equation 
	\begin{equation}\label{IBNLS}
		\left\{\begin{array}{ll}
			iu_t + \Delta^2 u=|x|^{-b}|u|^{\frac{8-2b}{N-4}}u,\quad N\geq 5,\\
			u(0,x)=u_0\in H^2(\R^N),
		\end{array}\right.
	\end{equation}
	where $\Delta^2$ stands for the biharmonic operator, $ b>0$ and $u=u(t,x)$ is a complex-valued function in space-time $\R\times\R^N$. Note that when $b=0$ we get the classic biharmonic nonlinear Schrödinger equation BNLS, which was introduced by \cite{Karpman} and \cite{Karpmn2} and it has been studied throughout the past two decades, where was proved several results about local and global well-posedness, scattering and blowup both in the energy space $H^2$ and in the fractional Sobolev spaces $H^s$.  See, for example, Artzi-Koch-Saut \cite{AKS}, Boulenger-Lenzmann \cite{BL17}, Dihn \cite{D}, Miao-Xu-Zhao \cite{MXZ, MXZ2}, Pausader \cite{Pausader, Pausader2}, and the references therein.
	
	As for the classical NLS equation, the equation IBNLS \eqref{IBNLS} enjoys the conservation of mass and energy, respectively given by
	\begin{equation}\label{mass}
		M(u(t)):=\int|u(t,x)|^2dx=M(u_0),
	\end{equation}
	\begin{equation}\label{energy}
		E(u(t)):=\frac{1}{2}\int |\Delta u(t,x)|^2dx-\frac{N-4}{2(N-b)}\int |x|^{-b}|u(t,x)|^{\frac{2(N-b)}{N-4}}dx=E(u_0).
	\end{equation}
	In addition, the term ``energy-critical'' comes from the fact that  the scaling symmetry $u_\lambda(t,x)=\lambda^{\frac{(N-4)}{2}}u(\lambda^4t,\lambda x)$, $\lambda>0$, leaves   the energy invariant in $H^2$, the so-called energy space.

	In recent years, there has been increasing interest in the IBNLS equation \eqref{IBNLS}. Several results have been established regarding global and local well-posedness and scattering of solutions for the subcritical, mass-critical and intercritical cases. The reader can check An-Ryu-Kim \cite{ARK1}-\cite{ARK3}, Campos-Guzmán \cite{CG}, Cardoso-Guzmán-Pastor \cite{CGP}, Guzmán-Pastor \cite{GP1, GP2}, and the references therein to details. As far as we are aware, the study of well-posedness for the biharmonic nonlinear Schrödinger equation with inhomogeneous nonlinearities in the critical regime was first established in Guzmán-Pastor \cite{GP2}, where the authors proved results concerning well-posedness and stability with initial data in the critical space $\dot{H}^2$. More precisely, they proved local well-posedness in $\dot{H}^2$ for \eqref{IBNLS} with the restrictions $5\geq N\leq 11$ and $0<b<\frac{12-N}{N-2}$.  Then, in An-Ryu-Kim \cite{ARK3}, the authors used Sobolev-Lorentz space to guarantee that \eqref{IBNLS} is local well-posed for $N\leq 5$ and $0<b<\min\{4,N/2\}$. They also proved global well-posedness and scattering for small data under the same restrictions.

	Before describing our result, we recall that for the classical nonlinear Schrodinger equation NLS in the mass-critical regime, Glassey \cite{G} proved finite-time blow-up for negative energy $H^1$ solutions in the virial space. Later, Ogawa-Tsutsumi \cite{OT} removed the virial assumption in the radial case. For the 	inhomogeneous model, Cardoso-Farah \cite{CF} obtained the same result without assuming radial symmetry, exploiting the decay away from the origin together with a subcritical Gagliardo–Nirenberg inequality. In the context of the biharmonic NLS , Boulenger-Lenzmann \cite{BL17} established analogous blowup results in the radial setting.
	
	When it comes to IBNLS, to the best of our knowledge, Bai-Majdoub-Saanouni \cite{BMS} proved in the mass-critical regime that, for $N\geq 1$, $0<b<\min\{N/2,4\}$ and negative energy, that the solution of \eqref{IBNLS} either blows-up in finite or infinite time, in the senses that $\n \Delta u(t)\nn \gtrsim t^2$ for $t\gg 1$.

	Inspired by these works, we investigate the blow-up phenomenon for the inhomogeneous BNLS on the  energy-critical scenario. Our main results are stated in the following	theorems. The first one concerns to the radial case and reads as follows.

	\begin{theorem}\label{radial}
		Let $N\geq 5$, $0<b<\min\{4, N/2\}$ and $u_0\in H^2(\R^N)$ radial be such that $E(u_0)<0$ or $0\leq E(u_0)<E(W)$ and $\n \Delta u_0 \nn > \n \Delta W \nn$. Then $u(t)$ blows-up in finite time in the sense that exists $T^* >0$ such that 
		\[
		\lim_{t \rightarrow T^*} \n \Delta u(t) \nn = +\infty,
		\]
			where $W$ is the ground state solution to \eqref{IBNLS}.
		\end{theorem}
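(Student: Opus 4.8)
The plan is to run a localized virial (convexity) argument in the spirit of Boulenger--Lenzmann \cite{BL17}, using the scale-invariant structure of the energy-critical nonlinearity to reduce the virial to a coercive quantity. Fix a radial $\phi\in C^\infty([0,\infty))$ with $\phi(r)=r^2$ for $r\le 1$, $\phi$ constant for $r\ge 2$, and $\phi''\le 2$, put $\phi_R(x)=R^2\phi(|x|/R)$ for $R\gg 1$, and consider the localized virial functional
\[
\mathcal{M}_R[u(t)]:=2\,\IM\int_{\R^N}\bar{u}\,\nabla\phi_R\cdot\nabla u\,dx .
\]
By mass conservation \eqref{mass}, the interpolation $\n\nabla u\nn\lesssim\n u\nn^{1/2}\n\Delta u\nn^{1/2}$ and Cauchy--Schwarz one has $|\mathcal{M}_R[u(t)]|\le C\,R\,\n\Delta u(t)\nn$, so $\mathcal{M}_R$ is finite and controlled by the norm we wish to blow up. First I would differentiate $\mathcal{M}_R$ along the flow \eqref{IBNLS}: after several integrations by parts (the computation being the inhomogeneous, localized analogue of the biharmonic virial identity) the principal part is the global expression obtained from $\phi_R=|x|^2$, while all corrections are supported in $\{|x|\ge R\}$. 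Since the nonlinearity is energy-critical, the requirement that the identity vanish on the static solution $W$ (which solves $\Delta^2 W=\x|W|^{\frac{8-2b}{N-4}}W$) forces the potential and kinetic coefficients to coincide, so the principal part is a multiple of
\[
\Phi(t):=\n\Delta u(t)\nn^2-\int\x|u(t)|^{\frac{2(N-b)}{N-4}}\,dx .
\]

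The next step is to show that $\Phi$ is strictly coercive under either hypothesis. Substituting the potential term from the conserved energy \eqref{energy} gives the algebraic identity
\[
\Phi(t)=\frac{2(N-b)}{N-4}\,E(u_0)-\frac{4-b}{N-4}\,\n\Delta u(t)\nn^2 .
\]
When $E(u_0)<0$ this is already $\le -\frac{4-b}{N-4}\n\Delta u(t)\nn^2<0$. In the second regime I would import the variational input: testing the elliptic equation against $W$ gives $\int\x|W|^{\frac{2(N-b)}{N-4}}=\n\Delta W\nn^2$ and hence $E(W)=\frac{4-b}{2(N-b)}\n\Delta W\nn^2$. The sharp constant in $\int\x|u|^{\frac{2(N-b)}{N-4}}\le C_*\n\Delta u\nn^{\frac{2(N-b)}{N-4}}$ is attained at $W$, so $E(u_0)\ge g(\n\Delta u(t)\nn^2)$ with $g(y)=\tfrac12 y-\frac{N-4}{2(N-b)}C_* y^{\frac{N-b}{N-4}}$ maximized at $y=\n\Delta W\nn^2$ with value $E(W)$. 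Since $E(u_0)<E(W)$ and $\n\Delta u_0\nn^2>\n\Delta W\nn^2$, a standard continuity/trapping argument forbids $\n\Delta u(t)\nn^2$ from reaching $\n\Delta W\nn^2$, whence $\n\Delta u(t)\nn^2\ge y_1>\n\Delta W\nn^2$ throughout the maximal interval. Plugging the bound $2(N-b)E(u_0)<(4-b)\n\Delta W\nn^2$ into the identity for $\Phi$ then gives $\Phi(t)\le-\delta<0$, and refining the estimate produces the coercive bound $\Phi(t)\le-c\,\n\Delta u(t)\nn^2$ valid in both regimes; the sharp inequality also yields a uniform lower bound $\n\Delta u(t)\nn\ge m>0$.

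Combining, the localized virial obeys $\frac{d}{dt}\mathcal{M}_R[u(t)]\le -c'\,\n\Delta u(t)\nn^2+\mathcal{A}_R(t)$, where the remainder $\mathcal{A}_R$ is supported in $\{|x|\ge R\}$ and, by the construction of $\phi$, is a sum of terms bounded by $\int_{|x|\ge R}\big(|\Delta u|^2+R^{-2}|\nabla u|^2+R^{-4}|u|^2\big)\,dx$ and $\int_{|x|\ge R}\x|u|^{\frac{2(N-b)}{N-4}}\,dx$. Here the radial hypothesis enters decisively: I would use the radial Sobolev (Strauss) inequality to gain negative powers of $R$ on the remote region, making the nonlinear contribution $o_R(1)$ and, after a careful bookkeeping of the Hessian quadratic form attached to $D^2\phi_R$ (which is the genuinely delicate point, since the second-order piece $\int_{|x|\ge R}|\Delta u|^2$ carries an $O(1)$ weight), absorbing $\mathcal{A}_R$ into $\tfrac{c'}{2}\n\Delta u\nn^2$ for $R$ fixed large. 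I expect this control of $\mathcal{A}_R$ to be the main obstacle.

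Finally I would close by convexity. With $R$ large one has $\frac{d}{dt}\mathcal{M}_R[u(t)]\le-\tfrac{c'}{2}\,\n\Delta u(t)\nn^2\le-\tfrac{c'}{2}m^2<0$, so $\mathcal{M}_R$ is strictly decreasing and becomes negative at some finite time $t_0$. Beyond $t_0$, combining the coercive differential inequality with the a priori bound $|\mathcal{M}_R[u(t)]|\le CR\,\n\Delta u(t)\nn$ yields a Riccati inequality $\frac{d}{dt}\mathcal{M}_R\le-\frac{c'}{2C^2R^2}\,\mathcal{M}_R^2$ for the negative quantity $\mathcal{M}_R$, which forces $\mathcal{M}_R(t)\to-\infty$, and hence $\n\Delta u(t)\nn\to+\infty$, at some finite $T^*<\infty$. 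This is exactly the assertion of Theorem \ref{radial}.
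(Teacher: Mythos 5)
Your proposal follows essentially the same route as the paper's proof: a Boulenger--Lenzmann localized virial (Lemma \ref{virial_lemma}), conversion of the principal part $16\big(\n\Delta u\nn^2-\int\x|u|^{\frac{2(N-b)}{N-4}}\big)$ into $\frac{32(N-b)}{N-4}E(u_0)-\frac{16(4-b)}{N-4}\n\Delta u\nn^2$ by energy conservation, coercivity below the ground state via the sharp inequality \eqref{GN} and the Pohozaev identities (Lemmas \ref{pohozaev} and \ref{groundstate}), Strauss's inequality \eqref{strauss} for the exterior nonlinear term, and a Riccati-type closing argument (yours is quadratic, using $|\mathcal{M}_R|\lesssim R\n\Delta u\nn$, which is legitimate only because you also secure the lower bound $\n\Delta u(t)\nn\geq m>0$; the paper's is quartic, using $|\V_R|\lesssim_{u_0,R}\n\Delta u\nn^{1/2}$ via \eqref{gradest} --- both versions close).

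The one genuine gap is precisely the step you flag as ``the main obstacle'': the Hessian term. You allow the remainder $\mathcal{A}_R$ to contain $\int_{|x|\ge R}|\Delta u|^2$ with an $O(1)$ weight and propose to absorb it into $\tfrac{c'}{2}\n\Delta u\nn^2$ ``for $R$ fixed large''. That absorption fails as stated: the coefficient of this term does not decay in $R$, and there is no a priori smallness of $\int_{|x|\ge R}|\Delta u(t)|^2$ uniformly in time along a putative global solution, so if that $O(1)$ constant exceeds $c'/2$ the differential inequality never becomes coercive. The resolution --- which is what the paper does, following the treatment of $\mathcal{A}_R^{[1]}$ in \cite{BL17} --- is that with your own cutoff no such remainder arises at all. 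Since $\phi_R''\le 2$ and $\phi_R'(r)\le 2r$ (property \eqref{215}), the Hessian $D^2\phi_R=\frac{\phi_R'}{r}\,\mathrm{Id}+\big(\phi_R''-\frac{\phi_R'}{r}\big)\frac{x\otimes x}{r^2}$ has both eigenvalues at most $2$ pointwise, so $\RE\sum_{j,k}\partial_{jk}\phi_R\,\partial_{ik}u\,\partial_{ij}\bar u\le 2\sum_{i,j}|\partial_{ij}u|^2$ pointwise, and on all of $\R^N$ one has $\sum_{i,j}\int|\partial_{ij}u|^2=\n\Delta u\nn^2$ by Plancherel; hence the \emph{entire} second-order contribution is bounded by $16\n\Delta u\nn^2$, whose coefficient exactly matches the one that energy conservation converts into $-\frac{16(4-b)}{N-4}\n\Delta u\nn^2$. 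The surviving remainders then all carry negative powers of $R$: $O(R^{-4})$, $O(R^{-2}\n\nabla u\nn^2)$, and the exterior nonlinear term. On that last point, your claim that Strauss makes the nonlinear contribution ``$o_R(1)$'' is also slightly imprecise: it produces $R^{-b-\frac{(N-1)(4-b)}{N-4}}\n\nabla u\nn^{\frac{4-b}{N-4}}$, a time-dependent quantity which, like the $R^{-2}\n\nabla u\nn^2$ term, must first be interpolated via \eqref{gradest} and Young's inequality into $\lesssim 1+\n\Delta u\nn^2$ and only then absorbed using the negative power of $R$, as the paper does to reach \eqref{virial_final}. With these corrections your argument coincides with the paper's proof.
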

	For the non radial case we have the following result.
	\begin{theorem}\label{nao_radial}
		Let $N\geq 5$, $0< b < \min\{4, N/2\}$ and $u_0 \in H^2(\mathbb{R}^N)$. Then we have the following results:
		\begin{enumerate}
			\item If $0<b < 16/N$ and $E(u_0) <0$ then the corresponding maximal solution of \eqref{IBNLS} $u(t)$ either blows-up in finite time or in infinite time in the sense that
			\[
			\lim_{t \rightarrow T_{max}} \n \Delta u(t)\nn = + \infty .
			\]
			\item If $b \geq 16/N$ and $E(u_0)<0$ or $0\leq E(u_0)<E(W)$ with $\n \Delta u_0 \nn > \n \Delta W \nn$, then $u(t)$ blows-up in finite time in the sense that exists $T^* >0$ such that 
			\[
			\lim_{t \rightarrow T^*} \n \Delta u(t) \nn = +\infty,
			\]
			where $W$ is the ground state solution to \eqref{IBNLS}.
		\end{enumerate}
	\end{theorem}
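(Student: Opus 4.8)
The plan is to run a localized virial/convexity argument, since $u_0$ is neither radial nor assumed to lie in the virial space $\{|x|u_0\in L^2(\R^N)\}$. Fix a smooth radial weight $\phi$ that equals $|x|^2$ for $|x|\le1$, is constant for $|x|\ge2$, and has bounded derivatives, put $\phi_R(x)=R^2\phi(x/R)$, and set $V_R(t)=\int\phi_R|u(t)|^2\,dx$, which is finite since $\phi_R$ is bounded and the mass \eqref{mass} is conserved. The first step is to derive the two virial identities along \eqref{IBNLS}: differentiating $V_R$ twice and integrating by parts, the region $|x|\le R$, where $\phi_R=|x|^2$, reproduces the exact biharmonic virial, so that
\begin{equation}
V_R''(t)=c_0\Big(\n\Delta u(t)\nn^2-\int\x|u(t)|^{\frac{2(N-b)}{N-4}}\,dx\Big)+\mR_R(t)
\end{equation}
for some constant $c_0>0$, with the remainder $\mR_R(t)$ collecting every term produced by the truncation and supported in $|x|\ge R$.

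Second, I would recast the bracketed main term using conservation of energy \eqref{energy}: solving \eqref{energy} for the nonlinear integral and substituting yields
\begin{equation}
\n\Delta u\nn^2-\int\x|u|^{\frac{2(N-b)}{N-4}}\,dx=\frac{b-4}{N-4}\,\n\Delta u\nn^2+\frac{2(N-b)}{N-4}\,E(u_0),
\end{equation}
whose kinetic coefficient $\tfrac{b-4}{N-4}$ is strictly negative because $0<b<4$. When $E(u_0)<0$ the energy term is also negative, so the right-hand side is at most $-\tilde c\,\n\Delta u\nn^2$ for some $\tilde c>0$. In the regime $0\le E(u_0)<E(W)$, $\n\Delta u_0\nn>\n\Delta W\nn$, I would instead use the sharp Sobolev inequality attained by $W$ together with the Pohozaev relation $\n\Delta W\nn^2=\int\x|W|^{\frac{2(N-b)}{N-4}}dx$ (giving $E(W)=\tfrac{4-b}{2(N-b)}\n\Delta W\nn^2$) to prove that $\{\n\Delta u\nn>\n\Delta W\nn\}$ is invariant with a uniform gap and that the main term is again coercively negative, $\le-\tilde c\,\n\Delta u\nn^2$, uniformly in $t$.

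The decisive step is the control of $\mR_R(t)$. Its purely kinetic contributions, involving second- and higher-order derivatives of $\phi_R$ on the annulus $R\le|x|\le2R$, are dominated with a favorable sign by the convexity and derivative bounds imposed on $\phi$ together with the conservation laws, and may be discarded when bounding $V_R''(t)$ from above. The genuinely dangerous piece is the nonlinear remainder, where $|x|^{-b}\le R^{-b}$ on its support gives $\int_{|x|\ge R}\x|u|^{\frac{2(N-b)}{N-4}}dx\le R^{-b}\int_{|x|\ge R}|u|^{\frac{2(N-b)}{N-4}}dx$. Since $0<b<4$ forces $2<\frac{2(N-b)}{N-4}<\frac{2N}{N-4}$, this exponent is $\dot{H}^2$-subcritical, so the Gagliardo–Nirenberg inequality gives $\int|u|^{\frac{2(N-b)}{N-4}}dx\le C\,\n u\nn^{\theta}\,\n\Delta u\nn^{\ea}$ with $\ea=\frac{N(4-b)}{2(N-4)}$, the $L^2$ factor frozen by conservation of mass. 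This is exactly where the threshold $16/N$ enters, since $\ea\le2\iff b\ge16/N$.

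For the conclusion I split into the two cases. If $b\ge16/N$ (part (2)), then $\ea\le2$, so the nonlinear remainder is at most $CR^{-b}\,\n\Delta u\nn^{\ea}$; combined with the bound $\le-\tilde c\,\n\Delta u\nn^2$ for the main term, a Young inequality and a large choice of $R$ absorb $\mR_R(t)$ and yield $V_R''(t)\le-c<0$. Then $V_R$ is nonnegative and concave, which forces the maximal time to be finite, and the blow-up alternative of the local $H^2$ theory promotes this to $\lim_{t\to T^*}\n\Delta u(t)\nn=+\infty$. If $0<b<16/N$ (part (1), $E(u_0)<0$), then $\ea>2$ and the remainder can no longer be absorbed pointwise; here I would argue by contradiction, assuming $T_{max}=+\infty$ with $\sup_t\n\Delta u(t)\nn<\infty$, which bounds $\mR_R(t)$ and again produces $V_R''(t)\le-c<0$, hence $V_R(t)\to-\infty$ in finite time, contradicting $V_R\ge0$; this gives the stated dichotomy with $\lim_{t\to T_{max}}\n\Delta u(t)\nn=+\infty$, after a quantitative growth estimate as in Bai–Majdoub–Saanouni \cite{BMS}. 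The main obstacle throughout is precisely this nonlinear error control without radial symmetry or finite variance: for $b<16/N$ the critical scaling of the error, quantified by $\ea>2$, defeats any pointwise absorption, which is why in part (1) only the weaker infinite-time conclusion and only the negative-energy hypothesis survive.
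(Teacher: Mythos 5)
Your high-level architecture matches the paper's: substituting the conserved energy to produce a negative kinetic coefficient, coercivity below the ground state via the Pohozaev identities for $W$ (Lemma \ref{groundstate}), the bound $\x\leq R^{-b}$ on the exterior region combined with Gagliardo--Nirenberg with exponent $\frac{N(4-b)}{2(N-4)}$, and the correct identification of the threshold $\frac{N(4-b)}{2(N-4)}\leq 2\iff b\geq 16/N$ as the obstruction separating parts (1) and (2). However, the foundation of your argument contains a genuine gap: for the biharmonic equation, the truncated variance $V_R(t)=\int\phi_R|u|^2\,dx$ does \emph{not} satisfy $V_R''(t)=c_0\bigl(\n\Delta u\nn^2-\int\x|u|^{\frac{2(N-b)}{N-4}}dx\bigr)+\mathcal{R}_R(t)$. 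A first differentiation along \eqref{IBNLS} gives
\begin{equation}
V_R'(t)=-2\IM\int\Delta u\,\bigl(2\nabla\phi_R\cdot\nabla\bar{u}+\Delta\phi_R\,\bar{u}\bigr)\,dx,
\end{equation}
already a second-order quantity, and differentiating again produces terms at the $\dot{H}^3$ level, not the $\dot{H}^2$ level. This is unavoidable, as one sees on the free flow: with $\hat{u}(t,\xi)=e^{-it|\xi|^4}\hat{u}_0(\xi)$ a direct computation gives $\frac{d^2}{dt^2}\int|x|^2|u|^2\,dx=c\,\n\nabla\Delta u_0\nn^2$ with $c>0$, a conserved $\dot{H}^3$ quantity that cannot be controlled by mass, energy, or $\n\Delta u\nn$. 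This is precisely the known obstruction, emphasized by Boulenger--Lenzmann \cite{BL17}, that forces one to abandon the variance and instead differentiate \emph{once} the momentum-type functional $\V_{\phi_R}(t)=-2\IM\int\nabla\phi_R\cdot\nabla u\,\bar{u}\,dx$; this is exactly what Lemma \ref{virial_lemma} and the resulting inequality \eqref{full_virial} do in the paper.

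This error propagates into two further gaps. First, your Glassey-style conclusion in part (2) --- ``$V_R$ is nonnegative and concave, which forces the maximal time to be finite'' --- is unavailable for the correct functional: $\V_R$ has no sign and no a priori lower bound, so after reaching $\V_R'(t)\leq -c\,\n\Delta u(t)\nn^2$ the paper must still run a nonlinear ODE argument, using $|\V_R(t)|\lesssim\n\nabla u\nn\n u\nn\lesssim_{u_0}\n\Delta u(t)\nn^{1/2}$ to derive $\V_R(t)\leq -C\int_{t_0}^{t}|\V_R(s)|^4\,ds$, whose solutions diverge in finite time; this step is an essential part of the finite-time conclusion and is absent from your plan. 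Second, your claim that the kinetic remainders on the annulus are ``dominated with a favorable sign\ldots and may be discarded'' is unjustified even for the momentum virial: in \eqref{full_virial} they are genuine error terms of size $O(R^{-4}+R^{-2}\n\nabla u\nn^2)$, and they are absorbed only via the interpolation $\n\nabla u\nn^2\lesssim\n\Delta u\nn\n u\nn\lesssim 1+\n\Delta u\nn^2$ together with a large choice of $R$. Your treatment of part (1) --- contradiction from an assumed uniform bound $\n\Delta u(t)\nn\leq M$ --- does coincide with the paper's scheme and would go through once transplanted onto $\V_R$ (contradicting $|\V_R(t)|\lesssim_{u_0}M^{1/2}$ via the fundamental theorem of calculus), but as written the proposal rests on a virial identity that is false for fourth-order dispersion.
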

	
	\begin{remark}
		The assumption $b\geq 16/N$ in the item (2) of Theorem \ref{nao_radial} is a technical restriction that we found in our argument. In order to handle the error term that appears in a virial-type inequality, we use classical Gagliardo-Nirenberg inequality combined with the free decay away from the origin that led us to the constraint $\frac{N(4-b)}{2(N-4)}\leq 2$, equivalently, $b\geq 16/N$ (see inequality \eqref{restriction} bellow). In an attempt to avoid this restriction, we employed a more refined estimate from the seminal paper by Caffarelli–Kohn–Nirenberg~\cite{CKN} to perform an interpolation, but ultimately arrived at the same constraint.
	\end{remark}
	
	The paper is organized as follows. In Section \ref{sec2} we introduce some notations and establishes some preliminary results that will be used throughout the work such as ground states solutions existence, virial-type inequality among others. In Section \ref{sec3} we prove Theorem \ref{radial} by introducing a virial-type inequality and using ODE argument. Finally, in Section \ref{sec4}, we prove the Theorem \ref{nao_radial}, where we benefit from the decay away from the origin that the term $|x|^{-b}$ gives to us.  
	
	\section{Notation and preliminary results}\label{sec2}
	\subsection{Notation} 
	Throughout the work we will use the standard notation in PDEs. In particular, $C$ will represent a generic constant which may vary from inequality to inequality. Given two positive number $a$ and $b$, we write $a\lesssim b$ whenever  $a\leq Cb$ for some constant $C>0$. Also, the subscript $a\lesssim_M b$ means that the constant $C$ on the assertion $a\leq Cb$ depends on the parameter $M$; similar for the case $a\gtrsim b$. For any $\theta>0$ we use the notation $O(R^{-\theta})$ to denote an infinitely small quantity of the same order as $R^{-\theta}$, i. e., $\frac{O(R^{-\theta)}}{R^{-\theta}}\rightarrow \ell\neq 0$ as $R\rightarrow\infty$.  For a complex number $z\in \mathbb{C}$, $\RE\,z$ and $\IM\,z$ represents its real and imaginary parts.  Also, $\bar{z}$ denotes the complex conjugate of $z$.
	
	We denote the standard Sobolev and the Lebesgue spaces by,  respectively, $H^2(\Omega)$ and $L^2(\Omega)$ with their usual norms. When $\Omega=\R^N$, we will abbreviate $H^2(\Omega)$ as $H^2$ and $L^2(\Omega)$ as $L^2$. 
	Integration of a function $f$ on $\R^N$ will be denoted by $\int fdx$ or simply $\int f$, if this will not cause a confusion.
	\subsection{Basic estimates}
	
	In this section we will set some estimates that will be useful later on this work. First, as a direct consequence of the Parseval's identity for the Fourier transform (see \cite[Proposition 2.1]{BMS}), we get
	\begin{equation}\label{gradest}
		\n \nabla u\nn ^2\lesssim \n \Delta u \nn^{1/2}\n u \nn^{1/2}.
	\end{equation}
	Inequality \eqref{gradest} generally fails to hold in bounded domains $\Omega$ except when $u\in W_0^{2,2}(\Omega)$.  Whether inequality \eqref{gradest} holds on exterior domains $\Omega$  without imposing zero trace on $\partial\Omega$ was studied in Crispo-Maremonti \cite{CM}. To summarize, the inequality holds when the set has the cone property, Therefore, since the exterior of an Euclidian ball in $\R^N$ possesses the cone property, it follows that 
	\begin{equation}\label{gradestball}
		\n \nabla u\n_{L^2(|x|>R)}\lesssim \n \Delta u\n_{L^2(|x|>R)}^{1/2}\n u \n_{L^2(|x|>R)}^{1/2}, \quad R>0.
	\end{equation}
	Moreover, a scaling argument shows that the constant $C$ in \eqref{gradestball} is independent of $R$. Another  inequality that plays an important role in the argument for the radial case is the well-known Strauss's inequality (see Strauss \cite{Strauss}), that reads as follows.
	\begin{equation}\label{strauss}
		\n u\n_{L^\infty(|x|>R)}\leq CR^{-(N-1)/2}\n u\n_{L^2}^{1/2}\n \nabla u\n_{L^2}^{1/2}.
	\end{equation}

	\subsection{Localized Virial Identity}
	
	Let $\phi:\R^N\rightarrow\R$ be a cut-off function. We define the virial quantity associated to a $H^2$ solution of \eqref{IBNLS} by
	\begin{equation}\label{virial}
		\V_\phi(t):= -2\IM\int\nabla\phi\cdot\nabla u(t) \ub(t)dx.	
	\end{equation}
	\begin{remark}
		The minus sign in the definition of the Virial identity, although uncommon, arises from the choice of a positive sign for the biharmonic term and a negative sign for the nonlinearity in equation~\eqref{IBNLS}. 
	\end{remark}
	\begin{lemma} \label{virial_lemma}
		Let $d\geq 5$. Suppose that $u\in C([0,T);H^2])$ is a solution of \eqref{IBNLS}. Then, for any $t\in[0,T)$, we have
		\[
		\begin{split}
			\V_\phi'(t)&=-4\sum_{j,k}^N\int \partial_{jk}\Delta \phi \partial_j u\partial_k\ub dx +\int \Delta^3\phi |u|^2dx\\
			&\quad+8\sum_{i,j,k}^N\int \partial_{jk} \phi \partial_{ik} u \partial_{ij}\ub dx-2\int\Delta^2\phi |\nabla u|^2dx\\
			&\quad-\frac{8-2b}{N-b}\int \Delta\phi \x|u|^{\ec +2}+\frac{2(N-4)}{N-b}\int \nabla\phi\cdot\nabla(\x)|u|^{\frac{2(N-b)}{N-4}}\\
		\end{split}
		\]
	\end{lemma}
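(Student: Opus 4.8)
The plan is to differentiate $\V_\phi$ directly in time, eliminate $u_t$ by means of \eqref{IBNLS}, and split the resulting expression into a linear (biharmonic) contribution and a nonlinear one, which I treat separately. From \eqref{IBNLS} one reads off $u_t = i\Delta^2 u - i\x|u|^{\ec}u$, and conjugating, $\ub_t = -i\Delta^2\ub + i\x|u|^{\ec}\ub$. Differentiating \eqref{virial} under the integral sign gives $\V_\phi'(t) = -2\IM\int \nabla\phi\cdot(\nabla u_t\,\ub + \nabla u\,\ub_t)\,dx$. Substituting the two expressions above and using $\IM(iz)=\RE(z)$, so that the factor $i$ coming from the equation converts the imaginary part into a real part, I would arrive at $\V_\phi'(t)=I_L+I_{NL}$, where
\[
I_L := -2\RE\Big[\int \nabla\phi\cdot\nabla(\Delta^2 u)\,\ub\,dx - \int \nabla\phi\cdot\nabla u\,\Delta^2\ub\,dx\Big]
\]
is the linear piece, and $I_{NL} := 2\RE\int \nabla\phi\cdot\nabla(\x|u|^{\ec}u)\,\ub\,dx - 2\RE\int \nabla\phi\cdot\nabla u\,\x|u|^{\ec}\ub\,dx$ is the nonlinear piece.

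I would dispose of $I_{NL}$ first, as it is the shorter computation and yields the last line of the asserted identity. Write $p:=\ec$ for brevity. Expanding $\nabla(\x|u|^{p}u)\,\ub$ by the product rule and taking the real part, the contribution $2\x|u|^{p}\RE(\nabla\phi\cdot\nabla u\,\ub)$ cancels exactly against the second integral of $I_{NL}$. The surviving terms are real; using the identity $|u|^2\,\nabla(|u|^{p}) = \tfrac{p}{p+2}\,\nabla(|u|^{p+2})$ and integrating by parts once against $\x\nabla\phi$ produces a $\Delta\phi$ term and a $\nabla\phi\cdot\nabla(\x)$ term. Since $p+2=\frac{2(N-b)}{N-4}$, collecting the constants gives
\[
I_{NL} = -\frac{8-2b}{N-b}\int \Delta\phi\,\x|u|^{\ec+2}\,dx + \frac{2(N-4)}{N-b}\int \nabla\phi\cdot\nabla(\x)\,|u|^{\frac{2(N-b)}{N-4}}\,dx .
\]

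The heart of the matter, and the step I expect to be the main obstacle, is the linear piece $I_L$. Here the strategy is to integrate by parts repeatedly so as to transfer all derivatives off $u$ and $\ub$ and onto $\phi$, while carefully tracking the fourth-order derivatives generated by $\Delta^2$. A convenient way to organize this bookkeeping is to recognize $I_L$ as the commutator expression $-\int \ub\,\big[\,2\nabla\phi\cdot\nabla+\Delta\phi,\ \Delta^2\,\big]\,u\,dx$, with $2\nabla\phi\cdot\nabla+\Delta\phi$ the symmetric virial generator; its evaluation separates naturally into the four quadratic forms in the statement, namely $-4\sum_{j,k}\int \partial_{jk}\Delta\phi\,\partial_j u\,\partial_k\ub\,dx$, $\int \Delta^3\phi\,|u|^2\,dx$, $8\sum_{i,j,k}\int \partial_{jk}\phi\,\partial_{ik}u\,\partial_{ij}\ub\,dx$, and $-2\int \Delta^2\phi\,|\nabla u|^2\,dx$. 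The delicate points are the correct symmetrization over the indices $i,j,k$ (which is what produces the numerical factors $4$ and $8$) and the verification that every boundary term vanishes; both are controlled by the fact that the cut-off $\phi$ has bounded derivatives and $u\in H^2$, so all integrals converge and no boundary contributions survive.

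Finally, I would remark that the whole computation is a priori only formal for merely $H^2$ solutions, since it involves derivatives of $u$ up to fourth order under the integral sign. The standard remedy is to establish the identity first for smooth solutions (or for a regularized problem) and then pass to the limit, using the continuity of $t\mapsto u(t)$ in $H^2$ together with the boundedness of the derivatives of $\phi$ to justify the limiting procedure.
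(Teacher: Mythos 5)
Your proposal is correct and takes essentially the same route as the paper: the paper likewise reduces $\V_\phi'(t)$ to commutators with the skew-symmetric generator $-i(\nabla\phi\cdot\nabla+\Delta\phi)$, quotes Boulenger--Lenzmann (Lemma 3.1 of \cite{BL17}) for the linear biharmonic piece --- exactly the computation you defer --- and evaluates the nonlinear piece by the same cross-term cancellation, the chain-rule identity for $\nabla(|u|^{p+2})$, and one integration by parts, with your coefficients $-\frac{8-2b}{N-b}$ and $\frac{2(N-4)}{N-b}$ checking out. Your closing remark on justifying the formal computation for merely $H^2$ solutions via regularization addresses a point the paper passes over silently, but it does not change the substance of the argument.
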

	\begin{proof}
		We start by noticing that
		\begin{equation}\label{R1}
			i\V_\phi(t)=-2i\IM \int \bu\nabla u\cdot\nabla \phi dx
			=\int u\nabla \bu\cdot\nabla \phi dx- \int  \bu\nabla u\cdot \phi dx.
		\end{equation}
		On the other hand, integration by parts yields
		\begin{equation}\label{R2}
			\int|u|^2\Delta\phi dx =-\int u\nabla \bu\cdot\phi dx-\int \bu\nabla u\cdot\nabla \phi dx.
		\end{equation}
		Combining \eqref{R1} and \eqref{R2} we get
		\begin{equation}\label{R3}
			\begin{split}
				i	\V_\phi(t)&=-2\int\bu\nabla u\cdot\nabla\phi dx-\int\bu\Delta\phi  u dx\\
				&=-\int \bu[\nabla\phi\cdot\nabla +\Delta\phi]u dx-\int \bu\nabla u\cdot\nabla\phi dx.
			\end{split}	
		\end{equation}
		If we rewrite $-\int \bu \nabla u \cdot \nabla\phi :=a+bi$, we conclude that $\V_\phi(t) = 2b$. Therefore
		\[
		\V_\phi(t)=-2\RE \int \bu\Gamma_{\phi}u dx,
		\]
		where $\GR:=-i\left(\nabla\phi \cdot\nabla + \Delta\phi \right)$. By introducing the notation $\langle u, v \rangle = 2 \RE (u,v)_{L^2}$ we get
		\[ \V_\phi(t):= - \langle  u ,\GR u \rangle=-\langle i  u, i\GR u\rangle,  \]
		taking the time derivative and using \eqref{IBNLS}, we obtain
		\[\begin{split}
			\V'_\phi(t)&=  \langle u ,\left[\Delta^2, i\GR \right]u  \rangle -  \langle u , [|x|^{-b}|u|^{\frac{8-2b}{N-4}}u, i\GR] u  \rangle\\
			&:= A-B,
		\end{split}\]
		where $[X,Y]=XY-YX$. Note that we may treat the term $A$ in the same way as $\mathcal{A}_R^{[1]}$ in \cite[Lemma 3.1]{BL17} (so we will omit the details) to get
		\begin{equation}\label{A1}
			\begin{split}
				A&= 8\RE \sum_{i,j,k}\int\partial^2_{j,k}\phi \partial^2_{i,k}u \partial^2_{i,j}\bu dx\\
				&\quad-4\RE \sum_{j,k}\int\partial_j\bu \partial^2_{j,k}\Delta\phi \partial_ku dx\\
				&\quad-2 \int\Delta^2\phi|\nabla u |^2dx+  \int\Delta^3\phi |u|^2dx.\\
			\end{split}
		\end{equation}
		For $B$, we use integration by parts 
		\begin{equation}\label{B1}
			\begin{split}
				B&= 2 \RE\left[\int (|x|^{-b}|u|^{\frac{8-2b}{N-4}}u) i\GR\bu dx-\int\bu  i\GR |x|^{-b}|u|^{\frac{8-2b}{N-4}}u dx\right]\\
				&=2 \RE\left[ \int |x|^{-b}|u|^{\frac{8-2b}{N-4}}u \nabla\phi \cdot\nabla\bu dx+\int |x|^{-b}|u|^{\frac{2(N-b)}{N-4}} \Delta\phi dx \right]\\
				&\quad -2 \RE\left[ \int \bu \nabla\phi_R\cdot\nabla(|x|^{-b}|u|^{\frac{8-2b}{N-4}}u)dx +\int |x|^{-b}|u|^{\frac{2(N-b)}{N-4}}\Delta\phi dx\right]\\
				&=2 \RE\left[2\int |x|^{-b}|u|^{\frac{8-2b}{N-4}}u\nabla \bu \cdot\nabla\phi dx+\int (|x|^{-b}|u|^{\frac{2(N-b)}{N-4}}) \Delta\phi dx\right].\\
			\end{split}
		\end{equation}
		For the first one, we use the equality $\nabla (|u|^{\frac{2(N-b)}{N-4}})=\frac{2(N-b)}{N-4}|u|^{\frac{8-2b}{N-4}}\RE\nabla\bu u$ and integration by parts to get
		\begin{equation}
			\begin{split}
				4 \RE\int |x|^{-b}|u|^{\frac{8-2b}{N-4}}u\nabla \bu \cdot\nabla\phi dx &= -\frac{2(N-4)}{N-b}\int \nabla(|x|^{-b})\nabla\phi |u|^{\frac{2(N-b)}{N-4}}dx \\
				&-\frac{2(N-4)}{N-b}\int (|x|^{-b}|u|^{\frac{2(N-b)}{N-4}}) \Delta\phi dx.
			\end{split}
		\end{equation}
		Combining the above equality with \eqref{B1} we get the desired result. 
	\end{proof}
	For $R>0$, set the radial cut-off function
	\begin{equation}\label{phiR}
		\phi_R(r)=R^2\phi\left(\frac{r}{R}\right),\quad r=|x|,
	\end{equation}
	where
	\[
	\phi(r)=\int_0^r\chi(s)ds,
	\]
	with $\chi:[0,\infty)\rightarrow[0,\infty)$ is a smooth function satisfying
	\[
	\chi(r)=\begin{cases} 
		2r&\hbox{if}\quad0\leq r\leq 1,\\
		2r-2(r-1)^2&\hbox{if}\quad 1<r<1+1/\sqrt{2}\\
		\chi'(r)<0&\hbox{if}\quad 1+1+1/\sqrt{2}<r<2\\
		0&\hbox{if} \quad r\geq 2.
	\end{cases}
	\]
	The next result summarize some properties of the cut-off function $\phi_R$. The proof can be found in \cite[Lemma 2.1]{D}.
	\begin{lemma}\label{l27}
		Let $\phi_R$ be the as in \eqref{phiR}. Then
		\begin{equation}\label{214}
			\phi'_R(r)=2r,\quad \phi''_R(r)=2,\quad \hbox{for } r\leq R,
		\end{equation}
		\begin{equation}\label{215}
			\phi'_R(r)-r\phi''_R(r)\geq 0,\quad \phi'_R(r)\leq 2r,\quad \phi''_R(r)\leq 2,		
		\end{equation}
		\begin{equation}\label{216}
			\n \nabla^j \phi_R\n_{L^\infty}\lesssim R^{2-j}, \quad 0\leq j\leq 6,
		\end{equation}
		and
		\begin{equation}\label{217}
			supp(\nabla^j \phi_R)\subset \begin{cases}
				\left\{|x|\leq 2R\right\} &\hbox{if } j=1,2,\\
				\left\{R\leq |x|\leq 2R  \right\}&\hbox{if } 3\leq j\leq 6.
			\end{cases}
		\end{equation}
	\end{lemma}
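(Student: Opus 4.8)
The plan is to reduce every assertion about $\phi_R$ to a corresponding statement about the one-dimensional profile $\chi$, using the scaling $\phi_R(r)=R^2\phi(r/R)$ together with the fundamental theorem of calculus $\phi'=\chi$, $\phi''=\chi'$. Differentiating in $r=|x|$ gives $\phi_R'(r)=R\,\chi(r/R)$ and $\phi_R''(r)=\chi'(r/R)$. Since $\chi(s)=2s$ and $\chi'(s)=2$ for $0\le s\le1$, evaluating at $s=r/R\le1$ immediately yields \eqref{214}.

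For \eqref{215} I would translate each inequality into one for $\chi$: writing $s=r/R$, the claims $\phi_R''(r)\le2$, $\phi_R'(r)\le2r$, and $\phi_R'(r)-r\phi_R''(r)\ge0$ become, respectively, $\chi'(s)\le2$, $\chi(s)\le2s$, and $R[\chi(s)-s\chi'(s)]\ge0$, the last with the harmless positive factor $R$. Each is then checked region by region from the explicit form of $\chi$. The only computation with content is on the transition interval $1<s<1+1/\sqrt2$, where $\chi(s)=2s-2(s-1)^2$ gives $\chi(s)-s\chi'(s)=2(s^2-1)>0$; on $[0,1]$ all three hold with equality or by inspection, on the region where $\chi'<0$ the sign condition is immediate since there $\chi\ge0$ and $-s\chi'(s)>0$, and for $s\ge2$ everything vanishes. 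This convexity-type bound $\phi_R'-r\phi_R''\ge0$ is really the heart of the construction, as it is the property that later supplies the correct sign in the virial estimate.

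For the pointwise bounds \eqref{216} and the support property \eqref{217} I would pass to $\Phi(x):=\phi(|x|)$ on $\R^N$ and use $\phi_R(x)=R^2\Phi(x/R)$, so that $\partial^\alpha\phi_R(x)=R^{2-|\alpha|}(\partial^\alpha\Phi)(x/R)$ and hence $\|\nabla^j\phi_R\|_{L^\infty}=R^{2-j}\|\nabla^j\Phi\|_{L^\infty}$; thus \eqref{216} follows once $\Phi\in C^6$ with bounded derivatives. The point to address is the regularity of the radial function $\Phi$ at the origin: since $\chi(s)=2s$ for $s\le1$ we have $\phi(s)=s^2$ there, so $\Phi(x)=|x|^2$ on $\{|x|\le1\}$ is a polynomial whose derivatives are smooth and bounded near $0$; away from the origin $\Phi$ is smooth because $\chi$ is, and it is constant for $|x|\ge2$. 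For \eqref{217} I would exploit the same two explicit regimes rescaled: $\phi_R(x)=|x|^2$ exactly on $\{|x|\le R\}$, whose derivatives of order $\ge3$ vanish identically, while $\phi_R$ is constant on $\{|x|\ge2R\}$, so all its derivatives vanish there. This localizes $\nabla^j\phi_R$ to $\{|x|\le2R\}$ for $j=1,2$ and to the annulus $\{R\le|x|\le2R\}$ for $3\le j\le6$.

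The main obstacle I anticipate is not any single estimate but the bookkeeping needed to guarantee that the profile $\chi$ is genuinely $C^\infty$ across the junctions $s=1$, $s=1+1/\sqrt2$, $s=2$ (the piecewise formulas fix only the qualitative shape and the decisive inequalities), together with the verification that $\Phi(x)=\phi(|x|)$ inherits this smoothness uniformly on $\R^N$ despite the singularity of $|x|$ at the origin. Once a smooth $\chi$ with the stated monotonicity is in hand, each conclusion follows from the reductions above, the substantive inequality throughout being $\chi(s)-s\chi'(s)\ge0$.
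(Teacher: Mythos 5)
Your proof is correct, and it is worth noting that the paper itself gives no argument for this lemma: it simply cites Dinh \cite[Lemma 2.1]{D} (the cut-off construction originates in Boulenger--Lenzmann \cite{BL17}), so your direct verification is essentially the standard proof from the cited source, with the added value of being self-contained. The reductions $\phi_R'(r)=R\chi(r/R)$ and $\phi_R''(r)=\chi'(r/R)$ are exactly right; \eqref{214} and the first two inequalities of \eqref{215} follow by inspection of $\chi$ region by region, and your one substantive computation, $\chi(s)-s\chi'(s)=2(s^2-1)>0$ on the transition interval, checks out (on the region where $\chi'<0$ the sign is indeed immediate since $\chi\geq 0$ by hypothesis, and everything vanishes for $s\geq 2$). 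Your treatment of \eqref{216} via $\partial^{\alpha}\phi_R(x)=R^{2-|\alpha|}(\partial^{\alpha}\Phi)(x/R)$, with the regularity of $\Phi(x)=\phi(|x|)$ at the origin settled by the identity $\Phi(x)=|x|^2$ on $\{|x|\leq 1\}$, is the correct way to handle the only delicate point, and the same two explicit regimes ($\phi_R(x)=|x|^2$ for $|x|\leq R$, $\phi_R$ constant for $|x|\geq 2R$) give \eqref{217}. Two housekeeping remarks: the paper's condition ``$\chi'(r)<0$ if $1+1+1/\sqrt{2}<r<2$'' is a typo for $1+1/\sqrt{2}$, which you silently and correctly fixed; and the smoothness across the junctions that you flag as the main obstacle requires no work here, since the statement postulates that $\chi$ is smooth (the piecewise formulas are consistent with this: at $s=1$ the quadratic matches $2s$ to first order, at $s=1+1/\sqrt{2}$ one has $\chi'=2-2\sqrt{2}<0$ matching the sign condition, and near $s=2$ a smooth function vanishing to infinite order exists by standard mollification), so your caveat is discharged by hypothesis rather than by additional estimates.
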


	\subsection{Ground State}
	
	Based on the results of Caffarelli-Kohn-Nirenberg \cite{CKN}, the following inequality is known to hold.
	
	\begin{theorem}{[Critical Sobolev inequality]}
		Let $u \in \dot{H}^2(\mathbb{R}^N)$ with $N >4$ and $b < min\{4, N/2\}$. Then,
		\begin{equation}\label{GN}
			\int |x|^{-b} |u|^{ \frac{2(N-b)}{N-4}}dx \leq K_{opt} \n \Delta u \nn^{\frac{2(N-b)}{N-4}}.
		\end{equation}
	\end{theorem}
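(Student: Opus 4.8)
The plan is to derive \eqref{GN} by reading the exponent $q:=\frac{2(N-b)}{N-4}$ as the unique power that makes $\int\x|u|^q\,dx$ invariant under the $\dot H^2$-critical scaling $u\mapsto\lambda^{(N-4)/2}u(\lambda\,\cdot)$. A one-line scaling computation shows that a bound of the form $\int|x|^{-\mu}|u|^p\,dx\lesssim\n\Delta u\nn^{p}$ can only hold when $(\mu,p)$ lies on the curve $p=\frac{2(N-\mu)}{N-4}$: the endpoint $\mu=0$ is the classical biharmonic Sobolev inequality and $\mu=4$ is the Rellich inequality, and since $0<b<4$ our target $\mu=b$ sits strictly between them. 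This strongly suggests obtaining \eqref{GN} by interpolating these two endpoints, and that is the route I would take.

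First I would record the two endpoint estimates. The critical Sobolev embedding $\dot H^2(\R^N)\hookrightarrow L^{\frac{2N}{N-4}}(\R^N)$ gives $\n u\n_{L^{2N/(N-4)}}\lesssim\n\Delta u\nn$ for $N>4$, while the Rellich inequality gives $\int\frac{|u|^2}{|x|^4}\,dx\lesssim\n\Delta u\nn^2$ for $N\geq5$. The heart of the argument is then a weighted Hölder split. Writing
\[
\x|u|^q=\left(\frac{|u|^2}{|x|^4}\right)^{b/4}\left(|u|^{\frac{2N}{N-4}}\right)^{(4-b)/4}
\]
and applying Hölder's inequality with the conjugate exponents $p_1=4/b$ and $p_2=4/(4-b)$ (both $>1$ precisely because $0<b<4$) yields
\[
\int\x|u|^q\,dx\leq\left(\int\frac{|u|^2}{|x|^4}\,dx\right)^{b/4}\left(\int|u|^{\frac{2N}{N-4}}\,dx\right)^{(4-b)/4}.
\]
Inserting the Rellich and Sobolev bounds into the two factors and adding the resulting powers of $\n\Delta u\nn$, namely $2\cdot\frac b4=\frac b2$ and $\frac{2N}{N-4}\cdot\frac{4-b}{4}=\frac{N(4-b)}{2(N-4)}$, the exponents combine to $\frac b2+\frac{N(4-b)}{2(N-4)}=\frac{2(N-b)}{N-4}=q$, exactly the power on the right-hand side of \eqref{GN}. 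I would remark that this argument uses only $N\geq5$ and $0<b<4$; the hypothesis $b<N/2$ is inherited from the Caffarelli–Kohn–Nirenberg framework and is not actually needed for this particular estimate.

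The main obstacle — and the reason I would ultimately defer to \cite{CKN} — is that the constant produced above is \emph{not} optimal: it is the product of the sharp Rellich and sharp Sobolev constants, whose extremals are different Aubin–Talenti-type profiles that cannot be reconciled in the Hölder step. To obtain the optimal constant $K_{opt}$ and, crucially, the existence of an extremal (the ground state $W$ used elsewhere in the paper), one must instead run a direct variational argument: minimize $\n\Delta u\nn^2$ subject to the normalization $\int\x|u|^q\,dx=1$ and recover compactness of a minimizing sequence. Because the weight $\x$ is centered at the origin, the only noncompact symmetry is the dilation $u\mapsto\lambda^{(N-4)/2}u(\lambda\,\cdot)$, so the task reduces to ruling out loss of mass through concentration at the origin and through spreading to infinity via a concentration–compactness analysis. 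This attainment step, rather than the inequality itself, is where the genuine difficulty lies, and it is precisely the content supplied by \cite{CKN}.
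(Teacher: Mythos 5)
Your proof is correct, and it is genuinely different from what the paper does: the paper gives no proof at all, simply quoting the inequality as known from Caffarelli--Kohn--Nirenberg \cite{CKN}, and later invoking Bhakta--Musina \cite{BM} for the existence of the extremizer $W$. Your Rellich--Sobolev interpolation is a clean, self-contained alternative: the pointwise factorization $|x|^{-b}|u|^{q}=\bigl(|x|^{-4}|u|^{2}\bigr)^{b/4}\bigl(|u|^{2N/(N-4)}\bigr)^{(4-b)/4}$ with $q=\frac{2(N-b)}{N-4}$ is exact (the exponents of $|x|$ and $|u|$ both check out, since $\frac{b}{2}+\frac{N(4-b)}{2(N-4)}=q$), the H\"older exponents $4/b$ and $4/(4-b)$ are admissible precisely for $0<b<4$, and both endpoint inequalities (Rellich and the biharmonic Sobolev embedding) are valid on $\dot H^2(\mathbb{R}^N)$ for $N\geq 5$ by density of $C_c^\infty$. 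This yields the inequality with \emph{some} finite constant, which indeed suffices for the statement as written: since $K_{opt}$ is by definition the reciprocal of $\inf J$, finiteness of any admissible constant implies the inequality holds with $K_{opt}$, and your honest acknowledgment that the H\"older route cannot produce the sharp constant or the extremal is exactly right --- that attainment is the hard variational content, needed later for the Pohozaev identities \eqref{P1}--\eqref{optimal} and Lemma \ref{groundstate}. Two small points: in the paper that attainment step is credited to Bhakta--Musina \cite{BM} (via a concentration--compactness-type argument for the biharmonic operator with Rellich-type potentials), not to \cite{CKN}, so your deferral should point there; and your observation that $b<N/2$ is not needed for the inequality itself is consistent with your proof (which uses only $N\geq5$ and $0<b<4$) --- that hypothesis in the paper is inherited from the well-posedness theory rather than forced by this estimate. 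What your approach buys is transparency and elementary self-containedness; what the paper's citation buys is the sharp constant and the extremizer in one stroke, which is what the blow-up argument actually consumes.
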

		Introducing the ``Weinstein function'' as
		\[
		J(f):= \frac{\n \Delta u \nn^{\frac{2(N-b)}{N-4}}}{\int |x|^{-b} |u|^{ \frac{2(N-b)}{N-4}}dx},
		\]
				Following the standard convention, $0\neq W\in {H}^2(\R^N)$ is said to be ground state if optimizes \eqref{GN}, i.e.
				\[J(W)=\inf\{J(f),\, f\in \dot{H}^2,\, f\neq0\}\]
				In Bhakta-Musina \cite[Theorem 5.1]{BM}, was proved that there is a unique, decreasing and positive radial ground state solution, namely $W$, which also satisfy the elliptic equation
	\begin{equation}\label{eliptica}
		\Delta^2 W(x) = |x|^{-b}|W(x)|^{\frac{8-2b}{N-4}}W(x).
	\end{equation}

	The following lemma establishes some Pohozaev identities for $W$ and characterizes the optimal constant to \eqref{GN}. 
	\begin{lemma}\label{pohozaev}[Pohozaev-type identities] Let $W\in \dot{H}^2\cap L^p$, with $p=\frac{2(N-b)}{N-4}$, be the unique solution to \eqref{eliptica}. Then the following identities holds
		\begin{equation}\label{P1} 
			\n \Delta W\nn^2=\displaystyle\int |x|^{-b} |W|^{p}dx;\end{equation}
		\begin{equation}\label{P2} 
			E(W)= \frac{4-b}{2(N-b)} \n \Delta W \nn^2;\end{equation}
		\begin{equation}\label{optimal}
			K_{opt} = \displaystyle\frac{1}{\n \Delta W \nn^{\frac{8-2b}{N-4}}}.\end{equation}
			\end{lemma}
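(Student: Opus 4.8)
The plan is to derive all three identities from a single fundamental relation, namely \eqref{P1}, obtained by pairing the elliptic equation \eqref{eliptica} with the ground state itself; the identities \eqref{P2} and \eqref{optimal} then follow by elementary algebra.

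First I would establish \eqref{P1}. Since $W\in\dot{H}^2$ solves \eqref{eliptica}, I would test the equation against $W$ in the weak sense. On the left-hand side, the weak formulation of the biharmonic operator gives directly $\int \Delta W\,\Delta W\,dx=\n \Delta W\nn^2$ (equivalently, two integrations by parts with no boundary contribution, which is legitimate because $W\in\dot{H}^2(\R^N)$). On the right-hand side, the exponents combine as $\ec+2=\frac{2(N-b)}{N-4}=:p$, so the pairing produces $\int \x|W|^{p}\,dx$. The only point requiring care is the convergence of this last integral in the presence of the singular weight $\x$; this is guaranteed a priori by the critical Sobolev inequality \eqref{GN}, which yields $\int \x|W|^{p}\,dx\le K_{opt}\n \Delta W\nn^{p}<\infty$. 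This establishes \eqref{P1}.

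Next, \eqref{P2} follows by substitution: inserting \eqref{P1} into the energy \eqref{energy} evaluated at $W$, i.e. replacing $\int \x|W|^{p}\,dx$ by $\n \Delta W\nn^2$, and collecting coefficients gives
\[
E(W)=\Bigl(\tfrac{1}{2}-\tfrac{N-4}{2(N-b)}\Bigr)\n \Delta W\nn^2=\frac{4-b}{2(N-b)}\n \Delta W\nn^2,
\]
where I used $(N-b)-(N-4)=4-b$. For \eqref{optimal}, I would invoke that $W$ is an optimizer of $J$, so equality holds in \eqref{GN}, that is $\int \x|W|^{p}\,dx=K_{opt}\n \Delta W\nn^{p}$. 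Combining this with \eqref{P1} gives $\n \Delta W\nn^2=K_{opt}\n \Delta W\nn^{p}$, whence $K_{opt}=\n \Delta W\nn^{2-p}$. Since $p-2=\frac{2(N-b)-2(N-4)}{N-4}=\ec$, this is precisely $K_{opt}=\n \Delta W\nn^{-\frac{8-2b}{N-4}}$, as claimed.

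The only genuinely non-formal step is the justification of the pairing in the derivation of \eqref{P1}, namely the integration by parts in $\dot{H}^2$ together with the convergence of the weighted nonlinear integral near the singularity of $\x$; once \eqref{GN} is invoked to secure integrability, everything else is bookkeeping. I would also remark that the usual dilation (Pohozaev) identity obtained by testing against $x\cdot\nabla W$ furnishes no additional information here: because the problem is energy-critical, that identity turns out to be proportional to \eqref{P1}, which is exactly why the single relation \eqref{P1} suffices to close all three identities.
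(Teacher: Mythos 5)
Your proposal is correct and follows essentially the same route as the paper: multiply \eqref{eliptica} by $W$ and integrate by parts to get \eqref{P1}, substitute into the energy \eqref{energy} for \eqref{P2}, and combine the optimality of $W$ in \eqref{GN} with \eqref{P1} to obtain \eqref{optimal}. Your added remarks on the integrability of the weighted term and the redundancy of the dilation identity in the energy-critical case are sound refinements of the same argument, not a different approach.
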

	\begin{proof}
		We get the first one by multiplying \eqref{eliptica} by $W$, integrating and using integration by parts. The second one is a direct consequence of \eqref{P1} and the definition of the energy \eqref{energy}. Finaly, since $W$ is the ground state solution,  it attains the optimal value in \eqref{GN}, leading us to
		\[
		\int |x|^{-b} |W|^{p}dx = K_{opt} \n \Delta W \nn^{p}.
		\]
		Combining with \eqref{P1} we get \eqref{optimal}.
	\end{proof}
	A direct consequence of the Pohozaev-type identities is the following lemma.
	\begin{lemma} \label{groundstate}
		Let $u(t)$ be a solution to \ref{IBNLS} satisfying $0 <E(u_0) < E(W)$ and $\n \Delta u_0\nn > \n \Delta W\nn$. Then, there exists $\delta >0$ such that 
		\[
		E(u(t)) \leq (1-\delta)\frac{4-b}{2(N-b)} \n \Delta u(t) \nn^2.
		\]
	\end{lemma}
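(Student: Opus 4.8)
The plan is to combine the critical Sobolev inequality \eqref{GN} with conservation of energy to set up a continuity (``energy trapping'') argument. Throughout write $p=\frac{2(N-b)}{N-4}$, so that $p-2=\frac{8-2b}{N-4}$, and abbreviate $y(t):=\n \Delta u(t)\nn^2$ and $y_W:=\n \Delta W\nn^2$. The starting point is to feed \eqref{GN} into the definition \eqref{energy} of the energy, obtaining the pointwise lower bound
\[
E(u(t))\geq f(y(t)),\qquad f(y):=\frac12\,y-\frac{N-4}{2(N-b)}K_{opt}\,y^{p/2},
\]
valid for every $t$ in the interval of existence.

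The next step is to analyze the real function $f$ on $[0,\infty)$. A direct computation, using the identity $\frac{N-4}{2(N-b)}\cdot\frac{p}{2}=\frac12$, gives $f'(y)=\frac12\bigl(1-K_{opt}\,y^{(p-2)/2}\bigr)$; since by \eqref{optimal} one has $K_{opt}=\n \Delta W\nn^{-(p-2)}=y_W^{-(p-2)/2}$, the unique critical point of $f$ is exactly $y=y_W$. As $p>2$, this critical point is a strict global maximum, $f$ being increasing on $[0,y_W]$ and decreasing on $[y_W,\infty)$. Evaluating, and using $K_{opt}\,y_W^{p/2}=y_W$ together with \eqref{P2}, yields $f(y_W)=\frac{4-b}{2(N-b)}y_W=E(W)$, so that $E(W)$ is precisely the maximal value of $f$.

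With this picture in hand, the core of the argument is a trapping/continuity step. By conservation of energy $E(u(t))=E(u_0)<E(W)=\max f$, whence $f(y(t))\leq E(u_0)<f(y_W)$ for all $t$; in particular $y(t)\neq y_W$. Since $t\mapsto y(t)$ is continuous (because $u\in C([0,T);H^2)$) and starts at $y(0)=\n \Delta u_0\nn^2>y_W$, it cannot cross the level $y_W$ without forcing $f(y(\cdot))$ to equal $f(y_W)$; hence $y(t)>y_W$ for every $t$. Because $f$ is decreasing on $(y_W,\infty)$, this confines the solution to the supercritical branch and, in particular, gives $\frac{4-b}{2(N-b)}y(t)>\frac{4-b}{2(N-b)}y_W=E(W)$.

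Finally I would extract the gap $\delta$. Set $\delta:=1-\dfrac{E(u_0)}{E(W)}$, which lies in $(0,1)$ because $0<E(u_0)<E(W)$. Then $(1-\delta)E(W)=E(u_0)$, and combining with the strict lower bound $\frac{4-b}{2(N-b)}y(t)>E(W)$ gives $(1-\delta)\frac{4-b}{2(N-b)}y(t)>(1-\delta)E(W)=E(u_0)=E(u(t))$, which is the claimed inequality (with room to spare). The only point requiring real care is the trapping step: one must verify that the continuity of $y(t)$ together with the strict inequality $f(y(t))<f(y_W)$ genuinely prevents $y$ from reaching $y_W$, so that the solution remains on the decreasing branch of $f$ for all time. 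The algebraic identifications $K_{opt}=y_W^{-(p-2)/2}$ and $f(y_W)=E(W)$, both furnished by Lemma \ref{pohozaev}, are exactly what make the maximum of $f$ coincide with $E(W)$ and thereby make the trapping possible.
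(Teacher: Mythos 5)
Your proposal is correct and follows essentially the same route as the paper's proof: feeding the optimal inequality \eqref{GN} into the energy, analyzing the resulting one-variable function whose maximum value is $E(W)$ attained at $\n \Delta W\nn$ (via the Pohozaev identities of Lemma \ref{pohozaev}), trapping $\n \Delta u(t)\nn > \n \Delta W\nn$ by continuity, and extracting $\delta$ from the strict gap $E(u_0)<E(W)$. The only difference is one of explicitness — you spell out the trapping step and the concrete choice $\delta = 1 - E(u_0)/E(W)$, which the paper compresses into a ``standard continuity argument.''
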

	\begin{proof}
		Given the initial conditions $0 <E(u_0) < E(W)$ and $\n \Delta u_0\nn > \n \Delta W\nn$, a standard continuity argument implies that $\n \Delta u(t)\nn > \n \Delta W\nn$ for all $t$ in the maximal interval of existence. Indeed, by the optimal Gagliardo–Nirenberg inequality \eqref{GN}, we have
		\begin{align} \label{rl1}
			E(u(t)) &= \frac{1}{2} \n \Delta u(t)\nn^2 - \frac{N-4}{2(N-b)} \int |x|^{-b} |u(t)|^{2 + \frac{8-2b}{N-4}} dx \nonumber \\
			&\geq \frac{1}{2} \n \Delta u(t)\nn^2 - \frac{N-4}{2(N-b)} \frac{ \n \Delta u(t)\nn^{2 + \frac{8-2b}{N-4}}}{\n \Delta W \nn^{\frac{8-2b}{N-4}}}.
		\end{align}
		
		Now, define the function
		\[
		f(x) = \frac{1}{2}x^2 - \frac{N-4}{2(N-b)} \frac{ x^{2 + \frac{8-2b}{N-4} }}{\n \Delta W \nn^{\frac{8-2b}{N-4}}}
		\]
		and note that $f(x)$ attains its maximum at $x_m = \n \Delta W \nn$, with $f(x_m) = E(W)$. Since $E(u_0) < E(W)$ and $\n \Delta u_0\nn > \n \Delta W \nn$, it follows from \eqref{rl1} and the continuity of the $\dot{H}^2$ norm of the solution that $\n \Delta u(t)\nn > \n \Delta W\nn$.
		
		We also note, by Pohozaev's identity \eqref{P2} that
		\[
		E(W) = \frac{4-b}{2(N-b)} \n \Delta W \nn^2.
		\]
		Therefore, using that $E(u(t)) < E(W)$, there exists $\delta >0$ such that $E(u(t)) < (1-\delta)E(W)$. Finally, using that  $\n \Delta u(t)\nn > \n \Delta W\nn$ we get the result.
	\end{proof}
	
	\section{Proof of Theorem \ref{radial}}\label{sec3}
	We start by setting the Morawetz potential 
	\begin{equation}\label{morawetz}
		\V_R(t):=\V_{\phi_R}(t)=-2\IM\int \nabla \phi_R\cdot \nabla u(t)\ub(t)dx,
	\end{equation}
	where $\phi_R$ is defined as in \eqref{phiR}.
	
	\begin{proposition}
		Let $u(t)$ be a radial solution of \eqref{IBNLS}.  We have \begin{equation} \label{cota_virial}
			\begin{split}
				\V_R'(t) &\leq \frac{32(N-b)}{(N-4)}E(u_0) - \frac{16(4-b)}{(N-4)} \n \Delta u(t) \nn^2 \\ &+ O\left(R^{-4} + R^{-2}\n \nabla u(t)\nn^2 + R^{-b -\frac{(N-1)(4-b)}{N-4}} \n \nabla u(t) \nn^{\frac{4-b}{N-4}}\right).
			\end{split}
		\end{equation}
	\end{proposition}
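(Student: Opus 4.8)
The plan is to begin from the general virial identity of Lemma \ref{virial_lemma} applied to $\phi=\phi_R$, and to split each of its six terms into a ``main part'' coming from the region $\{|x|\le R\}$, where $\phi_R(x)=|x|^2$ by \eqref{214}, plus remainders supported on the annulus $\{R\le|x|\le 2R\}$ or on $\{|x|>R\}$. Since $u$ is radial, I would first reduce the second– and higher–order terms to one–dimensional radial expressions. Decomposing the Hessians of $\phi_R$ and $u$ into radial and tangential projections and writing $u_r=\partial_r u$, one gets the pointwise identity
\[
8\sum_{i,j,k}\int \partial_{jk}\phi_R\,\partial_{ik}u\,\partial_{ij}\ub\,dx = 8\int\left(\phi_R''|u_{rr}|^2 + \frac{(N-1)\phi_R'}{r^3}|u_r|^2\right)dx,
\]
while $\int\Delta^2\phi_R|\nabla u|^2=\int\Delta^2\phi_R|u_r|^2$ and $\sum_{j,k}\int\partial_{jk}\Delta\phi_R\,\partial_j u\,\partial_k\ub=\int(\Delta\phi_R)''|u_r|^2$. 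On $\{|x|\le R\}$ one has $\phi_R''=2$, $\phi_R'=2r$ and $\Delta\phi_R=2N$ constant, so $\Delta^2\phi_R=(\Delta\phi_R)''=\Delta^3\phi_R=0$ there, and the four quadratic terms collapse to $16\int_{|x|\le R}\big(|u_{rr}|^2+\tfrac{N-1}{r^2}|u_r|^2\big)$, i.e. the restriction to $\{|x|\le R\}$ of $16\n\Delta u\nn^2$ in radial form.

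Next I would compare the quadratic terms against the global radial identity $\n\Delta u\nn^2=\int(|u_{rr}|^2+\tfrac{N-1}{r^2}|u_r|^2)$ and show
\begin{align*}
& 8\sum_{i,j,k}\int \partial_{jk}\phi_R\,\partial_{ik}u\,\partial_{ij}\ub - 2\int\Delta^2\phi_R|\nabla u|^2 - 4\sum_{j,k}\int\partial_{jk}\Delta\phi_R\,\partial_j u\,\partial_k\ub + \int\Delta^3\phi_R|u|^2 \\
&\qquad \le 16\n\Delta u\nn^2 + O\!\left(R^{-4} + R^{-2}\n\nabla u\nn^2\right).
\end{align*}
The key point is that the two leading differences, $8(\phi_R''-2)|u_{rr}|^2$ and $\tfrac{8(N-1)}{r^2}\big(\tfrac{\phi_R'}{r}-2\big)|u_r|^2$, are pointwise $\le 0$ thanks to the structural bounds $\phi_R''\le 2$ and $\phi_R'\le 2r$ from \eqref{215}, so they may simply be discarded in an upper bound; the only non–sign–definite remainders involve $\Delta^2\phi_R$, $(\Delta\phi_R)''$ and $\Delta^3\phi_R$, which are supported on $\{R\le|x|\le 2R\}$ and are $O(R^{-2})$, $O(R^{-2})$ and $O(R^{-4})$ respectively by \eqref{216}--\eqref{217}. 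Estimating these against $\int_{R\le|x|\le 2R}|u_r|^2\le\n\nabla u\nn^2$ and $\int_{R\le|x|\le 2R}|u|^2\le M(u_0)$ (mass conservation) produces the first two error terms.

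For the nonlinear part, using $\nabla\phi_R\cdot\nabla(\x)=-b\,\phi_R'\,r^{-b-1}$ I would rewrite it as $-\int D(r)\,\x|u|^{\frac{2(N-b)}{N-4}}$ with $D(r)=\tfrac{8-2b}{N-b}\Delta\phi_R+\tfrac{2b(N-4)}{N-b}\tfrac{\phi_R'}{r}$. A direct computation gives $D(r)=16$ on $\{|x|\le R\}$, so
\[
-\int D(r)\,\x|u|^{\frac{2(N-b)}{N-4}} = -16\int \x|u|^{\frac{2(N-b)}{N-4}} + \int_{|x|>R}\big(16-D(r)\big)\x|u|^{\frac{2(N-b)}{N-4}},
\]
where $16-D(r)$ is bounded on $\{|x|>R\}$. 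The remaining integral is controlled by $R^{-b}\int_{|x|>R}|u|^{\frac{2(N-b)}{N-4}}$; splitting $|u|^{\frac{2(N-b)}{N-4}}=|u|^{\ec}|u|^2$, bounding the first factor in $L^\infty(|x|>R)$ by Strauss's inequality \eqref{strauss}, and invoking mass conservation yields exactly $R^{-b-\frac{(N-1)(4-b)}{N-4}}\n\nabla u\nn^{\frac{4-b}{N-4}}$, the third error term. Finally, the main terms $16\n\Delta u\nn^2-16\int\x|u|^{\frac{2(N-b)}{N-4}}$ are converted using the energy identity \eqref{energy} to substitute $\int\x|u|^{\frac{2(N-b)}{N-4}}=\tfrac{2(N-b)}{N-4}\big(\tfrac12\n\Delta u\nn^2-E(u_0)\big)$, which after conservation $E(u(t))=E(u_0)$ gives precisely the coefficients $\tfrac{32(N-b)}{N-4}E(u_0)$ and $-\tfrac{16(4-b)}{N-4}\n\Delta u(t)\nn^2$.

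The hard part will be the reduction and sign analysis of the purely quadratic second–order block: establishing the radial Hessian identity for the leading term $8\sum\int\partial_{jk}\phi_R\,\partial_{ik}u\,\partial_{ij}\ub$ and verifying that, after subtracting the flat profile $16\n\Delta u\nn^2$, the non–localized portions are genuinely sign–favorable and can be dropped, rather than leaving uncontrolled $\int_{|x|>R}|\nabla^2 u|^2$ contributions. This is exactly where the structural properties \eqref{215} of the cut-off $\phi_R$ are essential, and is the step I would carry out most carefully.
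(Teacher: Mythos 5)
Your proposal is correct and follows essentially the same route as the paper: bound the quadratic (bilaplacian) block by $16\Vert\Delta u\Vert_{L^2}^2$ plus $O(R^{-4}+R^{-2}\Vert\nabla u\Vert_{L^2}^2)$ using the sign conditions \eqref{215} and the support/decay bounds \eqref{216}--\eqref{217}, identify your $D(r)$ (the paper's $\Phi_R$) as equal to $16$ on $\{|x|\le R\}$, estimate the tail of the nonlinear term by Strauss's inequality \eqref{strauss} together with mass conservation, and convert via the energy identity. The only difference is presentational: the paper delegates the radial reduction and sign analysis of the Hessian block to the treatment of $\mathcal{A}_R^{[1]}$ in Boulenger--Lenzmann \cite{BL17}, whereas you carry out that computation explicitly (and correctly).
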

	\begin{proof}
		We begin by recalling the following radial identities
		\begin{equation}\label{radial grad}
			\nabla_x=\frac{x}{r}\partial_r,
		\end{equation}
		\begin{equation}\label{radial lap}
			\Delta_x=\partial_r^2+\frac{N-1}{r}\partial_r,
		\end{equation} 
		\[
		\partial_j\partial_k=\left( \frac{\delta_{jk}}{r}-\partial{x_jx_k}{r^3} \right)\partial_r+\frac{x_jx_k}{r^2}\partial_r^2,
		\]
		\[
		\nabla_x\cdot \nabla(|x|^{-b})=-b|x|^{-b}\frac{\partial_r}{r}.
		\]
		Applying above identities in Lemma \ref{virial_lemma}, by using the definition of $\phi_R$ \eqref{phiR}, and the conservation of mass, we obtain
		\[
		\begin{split}
			\V_r'(t) &\leq 16 \n \Delta u(t) \nn^2 - \frac{8 - 2b}{N - b} \int \Delta \phi_R\, |x|^{-b} |u|^{\frac{8 - 2b}{N - 4} + 2}\, dx \\
			&\quad - \frac{2b(N - 4)}{N - b} \int \frac{\nabla \phi_R \cdot x}{|x|^2}\, |x|^{-b} |u|^{\frac{8 - 2b}{N - 4} + 2}\, dx + O(R^{-4} + R^{-2} \n \nabla u(t) \nn^2).
		\end{split}
		\]
		
		Using \eqref{radial grad} we deduce that $\frac{\nabla \phi_R \cdot x}{|x|^2} = \frac{\phi_R'}{r}$. Combining this with \eqref{radial lap} we deduce
		\[
		\begin{split}
			\V_R'(t) &\leq 16 \n \Delta u(t) \nn^2 - \int \Phi_R(r)\, |x|^{-b} |u|^{\frac{8 - 2b}{N - 4} + 2}\, dx + O(R^{-4} + R^{-2} \n \nabla u(t) \nn^2),
		\end{split}
		\]
		where
		\[
		\Phi_R(r) = \frac{8 - 2b}{N - b} \phi_R'' + \left( \frac{8 - 2b}{N - b}(N - 1) + \frac{2b(N - 4)}{N - b} \right) \frac{\phi_R'}{r}.
		\]
		
		A direct computation shows that inside the ball of radius $R > 0$, we have $\Phi_R(r) = 16$. Therefore,
		\[
		\begin{split}
			\V_R'(t) &\leq 16 \n \Delta u(t) \nn^2 - \int (\Phi_R(r) - 16)\, |x|^{-b} |u|^{\frac{8 - 2b}{N - 4} + 2}\, dx \\
			&\quad - 16 \int |x|^{-b} |u|^{\frac{8 - 2b}{N - 4} + 2}\, dx + O(R^{-4} + R^{-2} \n \nabla u(t) \nn^2).
		\end{split}
		\]
		
		Using the conservation of energy, we obtain
		\begin{equation} \label{full_virial}
			\begin{split}
				\V_R'(t) &\leq \frac{32(N - b)}{N - 4} E(u_0) - \frac{16(4 - b)}{N - 4} \n \Delta u(t) \nn^2 \\
				&\quad - E_r + O(R^{-4} + R^{-2} \n \nabla u(t) \nn^2),
			\end{split}
		\end{equation}

		where
		\[
		E_r = \int_{|x| > R} (\Phi_R(r) - 16)\, |x|^{-b} |u|^{\frac{8 - 2b}{N - 4} + 2}\, dx.
		\]
		
		To estimate this error term, we apply Strauss’s inequality \eqref{strauss} and the conservation of mass to get
		\[
		\begin{split}
			|E_r| &\leq R^{-b} M(u_0)\, \| u(t) \|_{L^\infty(|x| > R)}^{\frac{8 - 2b}{N - 4}} \\
			&\lesssim R^{-b - \frac{(N - 1)(4 - b)}{N - 4}} \n \nabla u(t) \nn^{\frac{4 - b}{N - 4}},
		\end{split}
		\]
		which concludes the proof of the lemma. 
	\end{proof}
	
	With this lemma in hand, we can now establish the blow-up result for the radial case. 
	
	\begin{proof}[Proof of Theorem \ref{radial}]
		We begin by showing the result in the case of negative energy. Using \eqref{gradest}, Young’s inequality and the conservation of mass, we observe that
		\[
		\begin{split}
			\n \nabla u(t) \nn^2 &\leq M(u_0)^{1/2} \n \Delta u(t) \nn \\
			&\lesssim 1 + \n \Delta u(t) \nn^2,
		\end{split}
		\]
		and similarly,
		\[
		\n \nabla u(t) \nn^{\frac{4 - b}{N - 4}} \lesssim 1 + \n \Delta u(t) \nn^2.
		\]
		
		Therefore, by choosing $R > 0$ sufficiently large and absorbing the error terms, we obtain
		\begin{equation} \label{virial_final}
			\V_R'(t) \leq \frac{16(N - b)}{N - 4} E(u_0) - \frac{8(4 - b)}{N - 4} \n \Delta u(t) \nn^2.
		\end{equation}
		
		Using the hypothesis $E(u_0) <0$ we get that $\V_R'(t)$ is strictly negative.  We now apply a standard ODE argument to conclude the proof. Suppose, for contradiction, that the maximal time of existence is $T = +\infty$. Since $\V_R'(t)$ is strictly negative, there exists a time $t_0$ such that $\V_R(t) \leq 0$ for all $t \geq t_0$. In particular, we have
		\[
		\V_R(t) \leq -C \int_{t_0}^t \| \Delta u(s) \|^2\, ds,
		\]
		for some constant $C > 0$. Moreover, by definition, we also have the estimate $|\V_R(t)| \lesssim \n \Delta u(t) \nn^{1/2}$. Combining these facts yields
		\begin{equation}
			\label{cota1}
			\V_R(t) \leq -C \int_{t_0}^t |\V_R(s)|^4\, ds,
		\end{equation}
		where $C = C(u_0, R)$. 
		
		Define
		\[
		A(t) = \int_{t_0}^t |\V_R(s)|^4\, ds.
		\]
		Note that $A(t)$ is non-decreasing, and from \eqref{cota1} we obtain
		\[
		A'(t) = |\V_R(t)|^4 \geq C^4 A(t)^4.
		\]
				Integrating this inequality from $t_1 > t_0$ to $t$ yields
		\[
		\int_{t_1}^t \frac{A'(s)}{A(s)^4}\, ds \geq C^4(t - t_1),
		\]
		which implies
		\[
		A(t)^3 \geq \frac{A(t_1)^3}{1 - 3C^4 A(t_1)^3 (t - t_1)}.
		\]
				Hence, $A(t) \to +\infty$ as $t \to t^* = t_1 + \frac{1}{3C^4 A(t_1)^3}$. Since $\V_R(t) \leq -C A(t)$, it follows that $\V_R(t) \to -\infty$ in finite time, which implies that $\n \Delta u(t) \nn \to \infty$ in finite time. This contradicts the assumption that the solution exists globally in time.
		
		We now turn our attention to the case of positive energy. Using the estimate \eqref{virial_final} in combination with Lemma \ref{groundstate}, we obtain
		\[
		\begin{split}
			\V_R'(t) &\leq \frac{16(N - b)}{N - 4} E(u_0) - \frac{8(4 - b)}{N - 4} \n \Delta u(t) \nn^2 \\
			&\leq (1 - \delta)\frac{8(4 - b)}{N - 4} \n \Delta u(t) \nn^2 - \frac{8(4 - b)}{N - 4} \n \Delta u(t) \nn^2 \\
			&= -8\delta \frac{4 - b}{N - 4} \n \Delta u(t) \nn^2,\\
			&\leq -8\delta \frac{4 - b}{N - 4} \n \Delta W \nn^2 \\
			&< 0\\
		\end{split}
		\]
		the argument proceeds along the same lines.
		
	\end{proof}
	
	\section{Proof of Theorem \ref{nao_radial}}\label{sec4}
	
	The main difference in the proof of this result lies in how we handle the error term
	\[
	E_R = \int_{|x| > R} (\Phi_R(r) - 16)\, |x|^{-b} |u|^{\frac{8 - 2b}{N - 4} + 2}\, dx,
	\]
	which appears in \eqref{full_virial}. Since the solution is no longer radial, we cannot apply Strauss' inequality. However, due to the presence of the $|x|^{-b}$ term, we benefit from decay away from the origin (see, e.g., Cardoso-Farah \cite{CF}).
	
	\begin{proof}[Proof of Theorem \ref{nao_radial}] 
		We split the proof into three cases.
		\begin{enumerate}
			\item \textbf{Case $0 < b < 16/N$}.
			
			Suppose that the solution is global in time and satisfies $\| \Delta u(t) \| \leq M$ for all $t$. Then, we can estimate the error term using the classical Gagliardo–Nirenberg inequality as follows:
			\[
			\begin{split}
				|E_r| &\lesssim R^{-b} \int |u|^{\frac{8 - 2b}{N - 4} + 2}\, dx \\
				&\lesssim R^{-b} \n \Delta u(t) \nn^{\frac{N(4 - b)}{2(N - 4)}} \| u(t) \nn^{\frac{b}{2}} \\
				&\lesssim_{u_0, M} R^{-b}.
			\end{split}
			\]
			Combining this with \eqref{full_virial}, and choosing $R > 0$ sufficiently large, we obtain
			\[
			\V_R'(t) \leq \frac{8(N - b)}{N - 4} E(u_0),
			\]
			which implies $\V_R'(t) \leq -C < 0$. Using the Fundamental Theorem of Calculus and the bound 
            \[
            \begin{split}
                |\V_R(t)| &\leq \n \nabla u(t)\nn \n u_0\nn \\
                &\lesssim_{u_0} \n \Delta u(t)\nn^{1/2} \\
                &\lesssim_{u_0} M^{1/2}
            \end{split}
            \]
            we reach a contradiction.
			\\
			\item \textbf{Case $b \geq 16/N$ and $E(u_0) < 0$}.
			
			The key observation in this case is that for $b \geq 16/N$, we have $\frac{N(4 - b)}{2(N - 4)} \leq 2$. Therefore, by Gagliardo Nirenberg's inequality and conservation of mass we have
			\begin{equation} \label{restriction}
			\begin{split}
				|E_r| &\lesssim R^{-b} \int |u|^{\frac{8 - 2b}{N - 4} + 2}\, dx \\
				&\lesssim R^{-b} \n \Delta u(t) \nn^{\frac{N(4 - b)}{2(N - 4)}} \n u(t) \nn^{\frac{b}{2}} \\
				&\lesssim_{u_0} R^{-b} + R^{-b} \| \Delta u(t) \nn^2,
			\end{split}
			\end{equation}
			since $\| \Delta u(t) \nn^{\frac{N(4 - b)}{2(N - 4)}} \leq 1 + \| \Delta u(t) \nn^2$. Again, choosing $R > 0$ sufficiently large yields \eqref{virial_final}. The same ODE argument as in the proof of Theorem \ref{radial} then implies finite-time blow-up.
			\\
			\quad\\
			\textbf{Case $b \geq 16/N$ and $0 \leq E(u_0) < E(W)$ with $\| \Delta u_0 \nn > \| \Delta W \nn$}.
			
			Using the estimate
			\[
			|E_r| \lesssim_{u_0} R^{-b} + R^{-b} \| \Delta u(t) \nn^2,
			\]
			and choosing $R > 0$ large enough, we can combine this with Lemma \ref{groundstate} to obtain
			\[
			\V_R'(t) \leq -8\delta \frac{4 - b}{N - 4} \| \Delta u(t) \nn^2,
			\]
			and the argument then proceeds exactly as in the proof of Theorem \ref{radial}.
		\end{enumerate}
	\end{proof}
	
	\subsection*{Acknowledgment}
		M. H. is supported by Conselho Nacional de Desenvolvimento Científico e Tecnológico - CNPq. R.S. is supported by Fundação de Amparo a Pesquisa do Estado de Minas Gerais - FAPEMIG. The authors thank Professors Luiz Gustavo Farah and Luccas Campos for his support and valuable suggestions during the preparation of this work.

\end{document}